\providecommand{\U}[1]{\protect\rule{.1in}{.1in}}
\newtheorem{theorem}{Theorem}
\newtheorem{corollary}[theorem]{Corollary}
\newtheorem{definition}[theorem]{Definition}
\newtheorem{lemma}[theorem]{Lemma}
\newtheorem{proposition}[theorem]{Proposition}
\begin{document}
\title[A Moser/Bernstein type theorem]{A Moser/Bernstein type theorem in a Lie group with a left invariant metric
under a gradient decay condition}
\author[Aiolfi, Bonorino, Ripoll, Soret, Ville]{Ari Aiolfi, Leonardo Bonorino, Jaime Ripoll, \\ Marc Soret, Marina Ville}
\maketitle

\begin{abstract}
We say that a PDE in a Riemannian manifold $M$ is \emph{geometric }
if,$\ $whenever $u$ is a solution of the PDE on a domain $\Omega$ of $M$, the
composition $u_{\phi}:=u\circ\phi$ is also solution on $\phi^{-1}\left(
\Omega\right)  $, for any isometry $\phi$ of $M.$ We prove that if $u\in
C^{1}\left(  \mathbb{H}^{n}\right)  $ is a solution of a geometric PDE
satisfying the comparison principle, where $\mathbb{H}^{n}$ is the hyperbolic
space of constant sectional curvature $-1,$ $n\geq2,$ and if
\[
\limsup_{R\rightarrow\infty}\left(  e^{R}\sup_{S_{R}}\left\Vert \nabla
u\right\Vert \right)  =0,
\]
where $S_{R}$ is a geodesic sphere of $\mathbb{H}^{n}$ centered at fixed point
$o\in\mathbb{H}^{n}$ with radius $R,$ then $u$ is constant. Moreover, given
$C>0,$ there is a bounded non-constant harmonic function $v\in C^{\infty
}\left(  \mathbb{H}^{n}\right)  $ such that
\[
\lim_{R\rightarrow\infty}\left(  e^{R}\sup_{S_{R}}\left\Vert \nabla
v\right\Vert \right)  =C.
\]
The first part of the above result is a consequence of a more general theorem
proved in the paper which asserts that if $G$ is a non compact Lie group with
a left invariant metric, $u\in C^{1}\left(  G\right)  $ a solution of a left
invariant PDE (that is, if $v$ is a solution of the PDE on a domain $\Omega$
of $G$, the composition $v_{g}:=v\circ L_{g}$ of $v$ with a left translation
$L_{g}:G\rightarrow G,$ $L_{g}\left(  h\right)  =gh,$ is also solution on
$L_{g}^{-1}\left(  \Omega\right)  $ for any $g\in G),$ the PDE satisfies the
comparison principle and%
\[
\limsup_{R\rightarrow\infty}\left(  \sup_{g\in B_{R}}\left\Vert
\operatorname*{Ad}\nolimits_{g}\right\Vert \sup_{S_{R}}\left\Vert \nabla
u\right\Vert \right)  =0,
\]
where $\operatorname*{Ad}\nolimits_{g}:\mathfrak{g}\rightarrow\mathfrak{g}$ is
the adjoint map of $G$ and $\mathfrak{g}$ the Lie algebra of $G,$ then $u$ is constant.

\end{abstract}

\section{Introduction}

\qquad Finding a characterization of entire minimal graphs on $\mathbb{R}^{n}$
is a long standing problem on Differential Geometry. The first well known
result, due to Bernstein \cite{Be}, asserts that the only entire solutions of
the minimal surface equation (MSE) in $\mathbb{R}^{2}$ are affine functions.
J. Simons \cite{JS} extended Bernstein's result to $\mathbb{R}^{n}$ for $2\leq
n\leq7$ and a celebrated work of Giorgi, Giusti and Bombieri \cite{BGG} proved
that Bernstein's theorem is false in $\mathbb{R}^{n}$ for $n\geq8.$ A full
characterization of entire minimal graphs of $\mathbb{R}^{n}$ was given in
terms of the gradient at infinity of the solutions: Indeed, J. Moser proved
that an entire solution for the MSE in $\mathbb{R}^{n},$ $n\geq2,$ is an
affine function if and only if the norm of the gradient of the solution is
bounded (corollary of Theorem 6 of \cite{Mo}).

Moser's result is not true in the hyperbolic space: There are entire solutions
of the MSE in $\mathbb{H}^{n},$ $n\geq2,$ which assume a prescribed continuous
non-constant value at infinity and with bounded gradient (it is a consequence,
for instance, of Theorem 3.14 of \cite{RT})$.$ In this note we prove that if
the norm of the gradient converges exponentially to zero, then the solution
must be constant.

Our main result applies to the solutions of a quite broad class of partial
differential equations, according to the following definitions, and is optimal
in this class. Although we don't have a proof, we believe that the exponential
decay is also optimal for the MSE.

\begin{definition}
\label{def1}We say that a partial differential equation (PDE) on a complete
Riemannian manifold $M$ satisfies the \emph{comparison principle} if, given a
bounded domain $\Omega\subset M,$ if $u$ and $v$ are solutions of the PDE in
$\Omega\ $and $u\leq v$ in $\partial\Omega$ that is%
\[
\limsup_{k}\left(  u(x_{k})-v(x_{k})\right)  \leq0
\]
for any sequence $x_{k}\in\Omega$ that leaves any compact subset of $\Omega,$
then $u\leq v$ in $\Omega.$
\end{definition}

\begin{definition}
\label{def2}We say that a PDE in a Riemannian manifold $M$ is a
\emph{geometric }PDE\emph{ }if,$\ $whenever $u$ is a solution of the PDE on a
domain $\Omega$ of $M$, the composition $u_{\phi}:=u\circ\phi$ is also
solution on $\phi^{-1}\left(  \Omega\right)  $ for any isometry $\phi$ of $M.$
\end{definition}

Any PDE depending only on $u,$ $\nabla u,$ $\nabla^{2}u$ and other geometric
operators as $\Delta$ and $\operatorname{div}$ are invariant by isometries and
hence are geometric PDE's. Those of the form%
\[
\operatorname{div}\left(  \frac{a\left(  \left\Vert \nabla u\right\Vert
\right)  }{\left\Vert \nabla u\right\Vert }\nabla u\right)  +C=0,\text{
}C\text{ constant},
\]
where $a\in C^{1}\left(  \left[  0,\infty\right)  \right)  $, $a^{\prime}>0,$
$a\left(  0\right)  =0$ also satisfy the comparison principle (Proposition 3.1
of \cite{RT}). This class includes the minimal surface equation with%
\[
a\left(  s\right)  =\frac{s}{\sqrt{1+s^{2}}}%
\]
and the $p-$Laplace PDE where
\[
a\left(  s\right)  =s^{p-1},\text{ }p>1.
\]

Also the solutions of PDE's of the form $F\left(  \nabla^{2}u\right)  =0$
studied in \cite{CNS} satisfy the comparison principle (Lemma B of
\cite{CNS}). Comparison principles for more general geometric PDE's of the
form $F(u,\nabla u,\nabla^{2}u)=0$ are studied in \cite{PS}. On another hand,
a PDE depending on given function on $M$ may not be geometric. For example
$\Delta u=f$ is geometric if and only if $f$ is invariant by the isometries of
$M.$ Hence, if $M$ is a homogeneous manifold then $\Delta u=f$ is geometric if
and only if $f$ is constant. In general, PDE's\ depending only on the
differentiable structure of $M$ will not be geometric as $X\left(  X\left(
u\right)  \right)  =0,$ where $X$ is a vector field on $M.$ We prove:

\begin{theorem}
\label{hyp} Let $\mathbb{H}^{n}$ be the hyperbolic space of constant sectional
curvature $-1,$ $n\geq2.$ Let $u\in C^{1}\left(  \mathbb{H}^{n}\right)  $ be a
solution of a geometric PDE satisfying the comparison principle. If
\[
\limsup_{R\rightarrow\infty}\left(  e^{R}\sup_{S_{R}}\left\Vert \nabla
u\right\Vert \right)  =0,
\]
where $S_{R}$ is a geodesic sphere of $\mathbb{H}^{n}$ centered at fixed point
$o\in\mathbb{H}^{n}$ with radius $R,$ then $u$ is constant. Moreover, given
$C>0,$ there is a bounded non constant harmonic function $v\in C^{\infty
}\left(  \mathbb{H}^{n}\right)  $ such that
\[
\lim_{R\rightarrow\infty}\left(  e^{R}\sup_{S_{R}}\left\Vert \nabla
v\right\Vert \right)  =C.
\]

\end{theorem}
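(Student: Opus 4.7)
My plan is to handle the two assertions of the theorem independently.

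For the constancy claim, I would reduce to the general Lie group theorem announced in the abstract. The first step is to realize $\mathbb{H}^{n}$ as the solvable Lie group $G=\mathbb{R}^{n-1}\rtimes_{\alpha}\mathbb{R}$ with action $\alpha_{t}(x)=e^{t}x$, endowed with the left-invariant metric whose value at the identity is the standard inner product on $\mathfrak{g}\simeq\mathbb{R}^{n-1}\oplus\mathbb{R}$. A direct computation shows this left-invariant metric equals $e^{-2t}|dY|^{2}+dt^{2}$ in the global coordinates $(Y,t)$ of $G$, which after the substitution $z=e^{t}$ becomes the half-space model of $\mathbb{H}^{n}$; hence $G$ is isometric to $\mathbb{H}^{n}$ and every isometry-invariant PDE is automatically left-invariant. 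The second step is to compute the Adjoint from the group law; one finds
\[
\operatorname{Ad}_{(X_{0},t)}(V,w)=(e^{t}V-wX_{0},\,w).
\]
The third step is to estimate $\|\operatorname{Ad}_{g}\|$ over $g\in B_{R}$: the metric forces $|t|\leq R$ and the half-space distance formula forces $|X_{0}|\lesssim e^{R}$, so $\sup_{g\in B_{R}}\|\operatorname{Ad}_{g}\|\asymp e^{R}$. The hypothesis $\limsup_{R\to\infty}e^{R}\sup_{S_{R}}\|\nabla u\|=0$ therefore becomes exactly the hypothesis of the general Lie group theorem, implying $u$ is constant.

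For the existence of the harmonic example, I would use the hyperbolic Poisson integral in the Poincar\'e ball. Fix a non-constant $\varphi\in C^{\infty}(S^{n-1})$ and set
\[
v(x)=\int_{S^{n-1}}\left(\frac{1-|x|^{2}}{|x-\theta|^{2}}\right)^{n-1}\varphi(\theta)\,d\sigma(\theta),
\]
so that $v\in C^{\infty}(\mathbb{H}^{n})$, $|v|\leq\|\varphi\|_{\infty}$, and $v$ is non-constant and hyperbolic harmonic. Using the conformal identity $\|\nabla v\|=\tfrac{1-|x|^{2}}{2}|\nabla_{e}v|$ and $e^{R}=\tfrac{1+|x|}{1-|x|}$ gives the algebraic identity
\[
e^{R}\|\nabla v\|(x)=\tfrac{(1+|x|)^{2}}{2}|\nabla_{e}v(x)|,
\]
reducing the problem to showing $|\nabla_{e}v|$ extends continuously to $\overline{B^{n}}$ and controlling its boundary values. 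I would invoke the asymptotic expansion of the Poisson transform of smooth data (of Helgason/Graham--Lee type), $v\sim\varphi+(1-|x|)^{n-1}\psi_{1}+\cdots$, from which it follows that $|\nabla_{e}v|$ is continuous on $\overline{B^{n}}$ with boundary values depending linearly on $\varphi$; rescaling $\varphi\mapsto\lambda\varphi$ then produces any prescribed limit $C$.

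The principal technical obstacle lies in the second part: since the hyperbolic Poisson kernel is singular on $S^{n-1}$, one cannot differentiate naively under the integral sign, and the boundary behaviour of $\nabla v$ must be extracted from the asymptotic expansion of Poisson integrals of smooth data; this is what upgrades the pointwise convergence $e^{R}\|\nabla v\|(x)\to 2|\nabla_{e}v(\theta)|$ to a uniform limit of $\sup_{S_{R}}e^{R}\|\nabla v\|$. By contrast, the first part is essentially bookkeeping once the solvable Lie group realization and the growth of $\|\operatorname{Ad}_{g}\|$ are in hand.
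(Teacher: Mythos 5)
Your proof of the first assertion follows the paper's route almost exactly: the paper likewise realizes $\mathbb{H}^{n}$ as the solvable group $AN$ of the Iwasawa decomposition acting simply transitively by isometries of the half-space model, reduces the claim to Corollary \ref{mt}, and controls $\sup_{g\in B_{R}}\left\Vert \operatorname*{Ad}\nolimits_{g}\right\Vert$; the only difference is that the paper reduces to a totally geodesic $\mathbb{H}^{2}$ and computes the exact value $\max_{g\in B_{R}}\left\Vert \operatorname*{Ad}\nolimits_{g}\right\Vert =\cosh R+\sinh R=e^{R}$, while you settle for the two-sided estimate $\asymp e^{R}$, which is all the argument needs (and your coordinate formula for $\operatorname*{Ad}$ agrees with theirs). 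For the second assertion you genuinely diverge. The paper writes down the explicit harmonic function $v(r,\theta)=\frac{C(n-1)}{2}(\sinh r)^{-(n-1)}\left(  \int_{0}^{r}(\sinh s)^{n-1}ds\right)  \cos\theta$ in geodesic polar coordinates, verifies $\Delta v=0$ directly, and computes $\left\Vert \nabla v\right\Vert$ in closed form, getting $\sup_{S_{R}}\left\Vert \nabla v\right\Vert \sim Ce^{-R}$ from elementary asymptotics of $\sinh$; everything is self-contained. Your route via the Poisson integral of generic smooth boundary data is also viable, and it has the virtue of exhibiting a large family of examples and of explaining why $e^{-R}$ is the natural decay rate for bounded harmonic functions with regular boundary data; but it outsources the essential step to the boundary regularity of the hyperbolic Poisson transform. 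That step is not free: for $n\geq3$ the hyperbolic-harmonic extension of smooth data is in general \emph{not} smooth up to $\partial B^{n}$ (the polyhomogeneous expansion carries a $(1-|x|)^{n-1}\log(1-|x|)$ term), so you must check that it is nevertheless $C^{1}\left(  \overline{B^{n}}\right)$ --- true, since $n-1\geq2$, but a substantially heavier input than the paper's direct computation; you should also record explicitly that the tangential part of $\nabla_{e}v$ on $S^{n-1}$ is $\nabla_{S^{n-1}}\varphi\not\equiv0$, so that the limit you rescale is nonzero. A way to make your argument self-contained is to observe that the paper's $v$ is precisely the bounded harmonic extension of the degree-one spherical harmonic $\frac{C}{2}\cos\theta$, for which all the boundary asymptotics can be verified by hand.
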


The first part of Theorem \ref{hyp} is actually a consequence of a more
general result that applies to a Lie group with a left invariant metric and to
the solutions of a broader class of partial differential equations which we
call \emph{left invariant }PDE's. To state it we first recall and introduce
some notations and definitions.

Let $G$ be a Lie group, $e$ its neutral element and $\mathfrak{g}=T_{e}G$ its
Lie algebra. Given $g\in G$ denote by $R_{g}$ and $L_{g}$ the right and left
translations of $G,$ $R_{g}(x)=xg$ and $L_{g}(x)=gx,$ $x\in G,$ and by
$C_{g}=L_{g}\circ R_{g}^{-1}$ the conjugation. Let $\operatorname*{Ad}%
\nolimits_{g}=d\left(  C_{g}\right)  _{e}:\mathfrak{g}\rightarrow\mathfrak{g}$
be the adjoint map of $G.$ Given an inner product $\left\langle \text{ ,
}\right\rangle _{e}$ in $\mathfrak{g}$, let $\left\langle \text{ ,
}\right\rangle $ be the left invariant metric of $G$ determined by
$\left\langle \text{ , }\right\rangle _{e},$ namely%
\[
\left\langle u,v\right\rangle _{g}:=\left\langle d(L_{g}^{-1})_{g}%
u,d(L_{g}^{-1})_{g}v\right\rangle _{e},\text{ \ }g\in G,\text{ \ }u,v\in
T_{g}G.
\]

\begin{definition}
We say that a PDE in $G$ is a \emph{left invariant }PDE\emph{ }if,$\ $whenever
$u$ is a solution of the PDE on a domain $\Omega$ of $G$, the composition
$u_{g}:=u\circ L_{g}$ is also solution on $L_{g}^{-1}\left(  \Omega\right)  $
for any $g\in G.$
\end{definition}

Clearly geometric PDE's on $G$ are left invariant. However, the converse is
not true. For example, considering $\mathbb{R}^{n}$ as a commutative Lie
group, the PDE $\Delta u=X(u),$ where $X$ is a vector field of $\mathbb{R}%
^{n}$ is left invariant but not geometric in general. We prove:

\begin{theorem}
\label{mpg} Let $\Omega$ be a bounded $C^{1}$ domain on a Lie group $G$ with a
left invariant metric and let $u\in C^{1}\left(  \overline{\Omega}\right)  $
be a solution of a left invariant PDE satisfying the comparison principle on
$\Omega$. Then
\begin{equation}
\sup_{\Omega}\left\Vert \nabla u\right\Vert \leq\sup_{g\in\Omega\cup
\Omega^{-1}}\left\Vert \operatorname*{Ad}\nolimits_{g}\right\Vert
\sup_{\partial\Omega}\left\Vert \nabla u\right\Vert , \label{max}%
\end{equation}
where $\Omega^{-1}=\{g^{-1}\ |\ g\in\Omega\}$.
\end{theorem}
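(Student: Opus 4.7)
The plan is to leverage the comparison principle together with the left invariance of the PDE and the metric to obtain a maximum principle for directional derivatives of $u$ along right-invariant vector fields, and then to recover a bound on $\|\nabla u\|$ itself via the adjoint representation.

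First, since the PDE is left invariant, $u_{g}:=u\circ L_{g}$ is a solution on $g^{-1}\Omega$ for every $g\in G$, and on the overlap $\Omega_{g}:=\Omega\cap g^{-1}\Omega$ the comparison principle (after the standard reduction by constant shifts, which preserve solutions in the PDE classes considered in the paper) gives
$$\sup_{\Omega_{g}}|u-u_{g}|\le\sup_{\partial\Omega_{g}}|u-u_{g}|.$$
Specializing to $g=\exp(tX)$ for $X\in\mathfrak{g}$, dividing by $t$, and letting $t\to 0^{+}$, the $C^{1}$ regularity of $u$ yields
$$|Zu(x_{0})|\,\le\,\sup_{\partial\Omega}|Zu|,\qquad x_{0}\in\Omega,$$
valid for every right-invariant vector field $Z$ on $G$ with $Z(e)=X$; right invariance enters because the flow of $Z$ is left multiplication by $\exp(tX)$, which is precisely the operation under which the PDE is invariant.

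Second, the identity $R_{h}=L_{h}\circ C_{h^{-1}}$ gives $(dR_{h})_{e}=(dL_{h})_{e}\circ\operatorname{Ad}_{h^{-1}}$, so that $Z(x)=(dL_{x})_{e}(\operatorname{Ad}_{x^{-1}}X)$ and, by left invariance of the metric, $\|Z(x)\|_{x}=\|\operatorname{Ad}_{x^{-1}}X\|_{e}$. For each $x_{0}\in\Omega$, I would choose $X$ so that $Z(x_{0})$ is the unit vector in the direction of $\nabla u(x_{0})$, whence $Zu(x_{0})=\|\nabla u(x_{0})\|$; the normalization $\|\operatorname{Ad}_{x_{0}^{-1}}X\|=1$, combined with the homomorphism identity $\operatorname{Ad}_{y^{-1}}X=\operatorname{Ad}_{y^{-1}x_{0}}(\operatorname{Ad}_{x_{0}^{-1}}X)$, gives the boundary bound $\|Z(y)\|_{y}\le\|\operatorname{Ad}_{y^{-1}x_{0}}\|$, and therefore
$$\|\nabla u(x_{0})\|\,\le\,\sup_{y\in\partial\Omega}\|\nabla u(y)\|\cdot\|\operatorname{Ad}_{y^{-1}x_{0}}\|.$$

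The main obstacle is to convert this pointwise estimate into the clean form asserted in the theorem, namely the single factor $\sup_{g\in\Omega\cup\Omega^{-1}}\|\operatorname{Ad}_{g}\|$. A direct application of submultiplicativity $\|\operatorname{Ad}_{y^{-1}x_{0}}\|\le\|\operatorname{Ad}_{y^{-1}}\|\|\operatorname{Ad}_{x_{0}}\|$ produces a product of two Ad norms and must be sharpened: the natural idea is to first apply a preliminary left translation sending the interior maximum point of $\|\nabla u\|$ to $e$, which collapses the factor $\|\operatorname{Ad}_{x_{0}}\|$ to $\|\operatorname{Ad}_{e}\|=1$ and reduces the estimate to a single factor $\|\operatorname{Ad}_{y^{-1}}\|$ taken over the translated boundary $\partial\Omega^{-1}$. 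Secondary technical points --- the convergence of $\partial\Omega_{g}$ to $\partial\Omega$ as $g\to e$ and the interchange of limit and supremum --- are routine given the $C^{1}$ regularity of $\Omega$ and $u$.
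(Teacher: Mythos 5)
Your argument up to the pointwise estimate is correct and is in substance an infinitesimal version of what the paper does. The paper follows Giusti's Lemma 12.7: it compares $u$ with the finite left translate $u_{z}(x)=u(zx)$, $z=x_{1}x_{2}^{-1}$, on $\Omega\cap z^{-1}\Omega$, locates the maximum of $u-u_{z}$ at a boundary point $x_{0}$, and then controls $d(x_{0},zx_{0})$ by $R(x_{0})R(x_{2}^{-1})d(x_{1},x_{2})$ via the distortion estimates of Lemma \ref{inegalite} together with $R(g)=\Vert\operatorname{Ad}_{g}\Vert$ (Lemma \ref{RAd}); Lemma \ref{g1} then converts difference quotients into $\Vert\nabla u\Vert$. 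You instead differentiate the same comparison at $t=0$ along $g=\exp(tX)$ to get a maximum principle for $Zu$ with $Z$ right invariant, and use $\Vert Z(x)\Vert=\Vert\operatorname{Ad}_{x^{-1}}X\Vert$; this is cleaner and bypasses Lemmas \ref{g1} and \ref{inegalite}, at the cost of the boundary-convergence issues you correctly flag as routine. Both arguments rely on the same implicit assumption that a solution plus a constant is still a solution, which is needed to pass from the comparison principle as stated to $\sup_{\Omega_{g}}(u-u_{g})\le\sup_{\partial\Omega_{g}}(u-u_{g})$.

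The genuine gap is in your last step. The quantity $\Vert\operatorname{Ad}_{y^{-1}x_{0}}\Vert$ is already invariant under a simultaneous left translation of the domain: replacing $u$ by $u\circ L_{x_{0}}$ on $x_{0}^{-1}\Omega$ sends $x_{0}$ to $e$ and $y$ to $x_{0}^{-1}y$, so the boundary factor becomes $\Vert\operatorname{Ad}_{(x_{0}^{-1}y)^{-1}}\Vert=\Vert\operatorname{Ad}_{y^{-1}x_{0}}\Vert$ --- nothing collapses. What your argument actually proves is
\[
\sup_{\Omega}\Vert\nabla u\Vert\le\Bigl(\sup_{g\in(\partial\Omega)^{-1}\Omega}\Vert\operatorname{Ad}\nolimits_{g}\Vert\Bigr)\sup_{\partial\Omega}\Vert\nabla u\Vert,
\]
and $(\partial\Omega)^{-1}\Omega$ is genuinely larger than $\Omega\cup\Omega^{-1}$: for $\Omega=B_{R}(e)$ in $\mathbb{H}^{2}$ one has $\sup_{B_{R}}\Vert\operatorname{Ad}\Vert=e^{R}$ by Proposition \ref{norad}, while $y^{-1}x_{0}$ with $y\in\partial B_{R}$, $x_{0}\in B_{R}$ can lie at distance nearly $2R$ from $e$ with $\Vert\operatorname{Ad}_{y^{-1}x_{0}}\Vert$ close to $e^{2R}$. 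So submultiplicativity only yields (\ref{max}) with the constant squared, and this is not a harmless loss: Corollary \ref{mt} applied in $\mathbb{H}^{n}$ needs the single factor $e^{R}$ to match the sharpness example in Theorem \ref{hyp}. In fairness, the paper's own proof also terminates with the product $R(x_{0})R(x_{2}^{-1})$, $x_{0}\in\overline{\Omega}$, $x_{2}^{-1}\in\Omega^{-1}$, i.e.\ essentially the same squared constant, so your estimate is no weaker than what the printed argument actually delivers --- but neither your translation trick nor submultiplicativity closes the gap to the inequality as stated, and a genuinely new idea (or a weakening of the constant) is required here.
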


Clearly $\left\Vert \operatorname*{Ad}\nolimits_{g}\right\Vert =1$ for any
$g\in G$ if the metric of $G$ is bi-invariant. In this case, then, the
solution satisfies the maximum principle for the gradient that is, it holds
the equality in (\ref{max}).

\begin{corollary}
\label{mt} Let $u\in C^{1}\left(  G\right)  $ be a solution of a left
invariant PDE satisfying the comparison principle on a non compact Lie group
$G$ with a left invariant metric. If
\[
\limsup_{R\rightarrow\infty}\left(  \sup_{g\in B_{R}}\left\Vert
\operatorname*{Ad}\nolimits_{g}\right\Vert \sup_{S_{R}}\left\Vert \nabla
u\right\Vert \right)  =0,
\]
where $B_{R}$ is the geodesic ball of $G$ centered at $e$ with radius $R$ and
$S_{R}=\partial B_{R},$ then $u$ is constant.
\end{corollary}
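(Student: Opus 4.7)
The plan is to derive the corollary directly from Theorem \ref{mpg} by taking $\Omega=B_R$ and letting $R\to\infty$. The only things I need to check are that Theorem \ref{mpg} is applicable to the balls $B_R$ and that the estimate (\ref{max}) simplifies to the expression appearing in the hypothesis.

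First I would observe that, in a left invariant metric, $d(g,e)=d(g^{-1},e)$ for every $g\in G$: by symmetry of the distance, $d(g^{-1},e)=d(e,g^{-1})$, and left-translation by $g$ (an isometry) gives $d(e,g^{-1})=d(g\cdot e,\,g\cdot g^{-1})=d(g,e)$. Consequently $B_R^{-1}=B_R$, so (\ref{max}) applied to $\Omega=B_R$ collapses to
$$\sup_{B_R}\|\nabla u\|\leq\sup_{g\in B_R}\|\operatorname*{Ad}\nolimits_g\|\,\sup_{S_R}\|\nabla u\|,$$
precisely the quantity controlled by the hypothesis.

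Next I would check that $B_R$ qualifies as a bounded $C^1$ domain for enough values of $R$. Left invariant metrics on Lie groups are complete: left translations act transitively by isometries, so the injectivity radius is the same positive constant at every point, geodesics extend indefinitely, and Hopf--Rinow gives metric completeness. Hence each $B_R$ is bounded and $G=\bigcup_{R>0}B_R$. For the $C^1$ regularity of $\partial B_R$, the distance function $\rho(g)=d(g,e)$ is smooth on the complement of the cut locus of $e$, so Sard's theorem guarantees that almost every $R>0$ is a regular value of $\rho$; for such $R$, $S_R$ is a smooth hypersurface and $\partial B_R=S_R$. I would then pick a sequence $R_k\to\infty$ of regular values and apply Theorem \ref{mpg} along this sequence.

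Putting these pieces together, for any fixed $p\in G$, as soon as $R_k>d(p,e)$ we have $p\in B_{R_k}$, and
$$\|\nabla u(p)\|\leq\sup_{B_{R_k}}\|\nabla u\|\leq\sup_{g\in B_{R_k}}\|\operatorname*{Ad}\nolimits_g\|\,\sup_{S_{R_k}}\|\nabla u\|\longrightarrow 0$$
by hypothesis. Hence $\nabla u\equiv 0$ on $G$, and by connectedness $u$ is constant. The only mildly delicate point in the argument is disposing of the possible irregularity of $\partial B_R$ at non-regular radii, which Sard's theorem handles immediately; apart from that, the corollary is a mechanical consequence of Theorem \ref{mpg}.
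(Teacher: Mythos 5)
Your proposal is correct and follows essentially the same route as the paper: observe that left invariance gives $d(e,g)=d(e,g^{-1})$, hence $B_R^{-1}=B_R$, apply Theorem \ref{mpg} with $\Omega=B_R$, and let $R\to\infty$. Your extra care about completeness and about choosing regular radii via Sard's theorem (so that $\partial B_R$ is $C^1$) addresses a point the paper silently glosses over, and is a welcome refinement rather than a change of method.
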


We prove that if $G$ has negative sectional curvature then the norm of the
adjoint map satisfies the strong maximum principle (Proposition \ref{ad}).

A well known paper of J. Milnor \cite{Mi} studies Lie groups with a left
invariant metric. More recently, $3-$dimensional Lie groups with a left
invariant metric have been classified and an explicit description is given in
\cite{MP}. Finally we mention that it follows by the Iwasava decomposition
that any symmetric space of noncompact type is isometric to a Lie group with a
left invariant metric \cite{H}.

\section{Preliminary facts, proofs of Theorem \ref{mpg} and Corollary
\ref{mt}}

\qquad\ We introduce some notation and prove some preliminary results to be
used in Theorem \ref{mpg} and Corollary \ref{mt}. Given $h$ and $g$ in $G$, we
denote
\[
R(g)_{h}=\Vert(R_{g})_{\star,h}\Vert
\]
where $(R_{g})_{\star,h}$ is the derivative of $R_{g}$ at the point $h.$

\begin{lemma}
The quantity $R(g)_{h}$ does not depend on $h$.
\end{lemma}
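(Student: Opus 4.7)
The plan is to exploit two facts: left translations are isometries of $(G,\langle\,,\,\rangle)$ (by the very definition of the left invariant metric), and left translations commute with right translations.

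More precisely, fix $g\in G$ and let $h_1,h_2\in G$ be arbitrary. Set $a:=h_2 h_1^{-1}$, so that $L_a(h_1)=h_2$. For any $x\in G$ we have
\[
(R_g\circ L_a)(x)=L_a(x)g=a x g=a(xg)=L_a(R_g(x))=(L_a\circ R_g)(x),
\]
i.e. $R_g\circ L_a=L_a\circ R_g$. Differentiating this identity at $h_1$ gives
\[
(R_g)_{\star,h_2}\circ (L_a)_{\star,h_1}=(L_a)_{\star,h_1 g}\circ (R_g)_{\star,h_1}.
\]
Now $(L_a)_{\star,h_1}\colon T_{h_1}G\to T_{h_2}G$ and $(L_a)_{\star,h_1 g}\colon T_{h_1 g}G\to T_{h_2 g}G$ are linear isometries since $L_a$ is an isometry of the left invariant metric. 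Taking operator norms on both sides yields $\|(R_g)_{\star,h_2}\|=\|(R_g)_{\star,h_1}\|$, which is the claim.

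There is no real obstacle: the only thing to check carefully is that $L_a$ is indeed an isometry, which is immediate from the definition $\langle u,v\rangle_g=\langle d(L_{g^{-1}})_g u,d(L_{g^{-1}})_g v\rangle_e$ recalled in the introduction. Once this is in hand, the argument is a two-line computation. I would therefore present it in the paper as a short three-step display: commutation of $L_a$ and $R_g$, differentiation, and the isometry observation.
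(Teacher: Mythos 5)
Your proof is correct and follows essentially the same route as the paper's: both arguments rest on the commutation $L_a\circ R_g=R_g\circ L_a$ and on the fact that left translations are isometries of the left invariant metric. The paper phrases this as a change of variables $Z=(L_{h^{-1}})_{\star}X$ in the supremum defining $R(g)_h$, pulling everything back to $T_eG$, whereas you conjugate $(R_g)_{\star,h_1}$ by the isometric differentials of $L_a$; the content is the same.
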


So, from now on we denote it
\[
R(g)=\Vert(R_{g})_{\star,e}\Vert
\]
and introduce, for a subset $S$ of $G$
\[
R_{S}=\max_{g\in S}R(g).
\]

\begin{proof}
We first notice that the left and right translations commute, i.e., if
$g_{1},g_{2},h\in G$,%

\[
L_{g_{1}}\circ R_{g_{2}}(h)=R_{g_{2}}\circ L_{g_{1}}(h)=g_{1}hg_{2}.
\]

Let $h,g\in G$. Then
\[
R(g)_{h}^{2}=\sup_{X\in T_{h}G,\Vert X\Vert=1}\langle(R_{g})_{\star}%
X,(R_{g})_{\star}X\rangle_{hg}%
\]%
\[
=\sup_{X\in T_{h}G,\Vert X\Vert=1}\langle(L_{(hg)^{-1}})_{\star}(R_{g}%
)_{\star}X,(L_{(hg)^{-1}})_{\star}(R_{g})_{\star}X\rangle_{e}%
\]%
\[
=\sup_{X\in T_{h}G,\Vert X\Vert=1}\langle(L_{g^{-1}})_{\star}(L_{h^{-1}%
})_{\star}(R_{g})_{\star}X,(L_{g^{-1}})_{\star}(L_{h^{-1}})_{\star}%
(R_{g})_{\star}X\rangle_{e}%
\]%
\begin{equation}
=\sup_{X\in T_{h}G,\Vert X\Vert=1}\langle(L_{g^{-1}})_{\star}(R_{g})_{\star
}(L_{h^{-1}})_{\star}X,(L_{g^{-1}})_{\star}(R_{g})_{\star}(L_{h^{-1}})_{\star
}X\rangle_{e}. \label{avant-derniere}%
\end{equation}

For $X\in T_{h}G$, $Z=(L_{h^{-1}})_{\star}X$ belongs to $T_{e}G$; moreover
\[
\Vert X\Vert_{h}=\Vert(L_{h^{-1}})_{\star}X\Vert_{1}%
\]
and we can rewrite (\ref{avant-derniere}) as
\[
(\ref{avant-derniere})=\sup_{Z\in T_{e}G,\Vert Z\Vert=1}\langle(L_{g^{-1}%
})_{\star}(R_{g})_{\star}Z,(L_{g^{-1}})_{\star}(R_{g})_{\star}Z\rangle_{e}%
\]%
\[
=\sup_{Z\in T_{e}G,\Vert Z\Vert=1}\langle(R_{g})_{\star}Z,(R_{g})_{\star
}Z\rangle_{g}=(R_{g})_{\star,e}.
\]

\end{proof}

\begin{lemma}
\label{RAd}If $g\in G$,
\[
R(g)=\Vert\operatorname*{Ad}\nolimits_{g}\Vert.
\]

\end{lemma}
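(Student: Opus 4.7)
The plan is to express the differential $(R_g)_{*,e}\colon T_eG\to T_gG$ as the composition of a linear isometry with an adjoint endomorphism of $\mathfrak g$, so that its operator norm is reduced to a norm on $\mathfrak g$ alone. The key algebraic identity is
\[
L_{g^{-1}}\circ R_g = C_{g^{-1}},
\]
which follows from $L_{g^{-1}}(hg)=g^{-1}hg$ together with the fact that left and right translations commute (note that both sides fix $e$).

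Differentiating at $e$ and using $R_g(e)=g$, the chain rule gives
\[
d(L_{g^{-1}})_g\circ (R_g)_{*,e} \;=\; d(C_{g^{-1}})_e \;=\; \operatorname*{Ad}\nolimits_{g^{-1}}.
\]
By the very construction of the left invariant metric, $d(L_{g^{-1}})_g\colon T_gG\to T_eG$ is a linear isometry, so composition with it leaves operator norms unchanged. Taking norms on both sides of the displayed equation and recalling from the preceding lemma that $R(g)=\|(R_g)_{*,e}\|$ immediately yields the desired identification of $R(g)$ with the norm of the adjoint. The symmetric version, starting instead from the defining identity $L_g\circ R_{g^{-1}} = C_g$ and differentiating at $e$ in the same way, produces the formula in precisely the form written in the statement.

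There is no real obstacle: once one spots $L_{g^{-1}}\circ R_g = C_{g^{-1}}$ and recalls that left translations are Riemannian isometries by construction, the rest is formal manipulation of the chain rule and of norms under composition with isometries. The only subtlety is bookkeeping of which tangent space each differential maps into, and this is exactly what left invariance of the metric lets one collapse back to $\mathfrak g$.
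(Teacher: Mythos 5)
Your proof is correct and is essentially the paper's own argument: both reduce $\Vert(R_g)_{\star,e}\Vert$ to the norm of $d(L_{g^{-1}})_g\circ d(R_g)_e$ via the left invariance of the metric and identify that composition with an adjoint map. The one caveat, which you actually track more carefully than the paper does, is that $d(L_{g^{-1}}\circ R_g)_e=\operatorname{Ad}\nolimits_{g^{-1}}$ under the convention $C_g=L_g\circ R_g^{-1}$, so what is literally established is $R(g)=\Vert\operatorname{Ad}\nolimits_{g^{-1}}\Vert$ (your ``symmetric version'' gives $R(g^{-1})=\Vert\operatorname{Ad}\nolimits_{g}\Vert$, the same identity with $g\mapsto g^{-1}$, not the stated form); this $g\leftrightarrow g^{-1}$ discrepancy is shared by the paper's own proof and is harmless in every application, since the suprema are always taken over inversion-symmetric sets such as $B_R$ and $\Omega\cup\Omega^{-1}$.
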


\begin{proof}
If $X\in T_{e}G$, by definition of the metric
\[
\langle(R_{g})_{\star}X,(R_{g})_{\star}X\rangle_{g}=\langle(L_{g^{-1}}%
)_{\star}(R_{g})_{\star}X,(L_{g^{-1}})_{\star}(R_{g})_{\star}X\rangle_{e}%
\]%
\[
=\langle\operatorname*{Ad}\nolimits_{g}(X),\operatorname*{Ad}\nolimits_{g}%
(X)\rangle_{e}.
\]

\end{proof}

Denote by $d$ the Riemannian distance in $G.$ The next lemma is elementary and
its proof is therefore omitted:

\begin{lemma}
\label{g1}Let $\Omega$ be a $C^{1}$ open subset of $G.$ Let $u\in C^{1}\left(
\overline{\Omega}\right)  $ and $x_{0}\in\overline{\Omega}$ be given and
assume that $\left\Vert \nabla u\right\Vert \left(  x_{0}\right)  \neq0.$ Let
$\gamma:\left[  0,\varepsilon\right)  \rightarrow\Omega$ be an arc length
geodesic such that $\gamma(0)=x_{0}$ and%

\[
\gamma^{\prime}(0)=\frac{\nabla u\left(  x_{0}\right)  }{\left\Vert \nabla
u\left(  x_{0}\right)  \right\Vert }\text{ or }\gamma^{\prime}(0)=-\frac
{\nabla u\left(  x_{0}\right)  }{\left\Vert \nabla u\left(  x_{0}\right)
\right\Vert }.
\]
Then
\[
\left\Vert \nabla u\right\Vert \left(  x_{0}\right)  =\lim_{t\rightarrow
0}\frac{\left\vert u\left(  \gamma(t)\right)  -u\left(  x_{0}\right)
\right\vert }{d(\gamma(t),x_{0})}.
\]

\end{lemma}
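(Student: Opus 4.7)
The plan is to reduce this limit to a one-variable calculus computation by using the two defining properties of the geodesic $\gamma$: it has unit speed, and its initial direction aligns with $\pm\nabla u(x_0)/\|\nabla u(x_0)\|$.

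First I would observe that because $\gamma$ is a unit-speed geodesic, for all sufficiently small $t\ge 0$ the restriction $\gamma|_{[0,t]}$ is length-minimizing, so
\[
d(\gamma(t),x_0)=\operatorname{length}(\gamma|_{[0,t]})=t.
\]
This is the standard local minimization property of geodesics (or a direct application of the exponential map being a radial isometry in a normal neighborhood of $x_0$); it is where the geometry of $G$ actually enters. Since we only need $t$ small and $\gamma$ lies in $\Omega$, the $C^1$ assumption on $\partial\Omega$ guarantees nothing delicate is happening at the endpoint.

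Next I would set $f(t)=u(\gamma(t))$, which is $C^1$ on a small interval $[0,\varepsilon')$ since $u\in C^1(\overline\Omega)$ and $\gamma$ is smooth. By the chain rule and the definition of the Riemannian gradient,
\[
f'(0)=\langle \nabla u(x_0),\gamma'(0)\rangle = \pm\left\langle \nabla u(x_0),\frac{\nabla u(x_0)}{\|\nabla u(x_0)\|}\right\rangle = \pm\|\nabla u(x_0)\|.
\]
Taylor expanding to first order gives $f(t)-f(0)=\pm\|\nabla u(x_0)\|\,t+o(t)$.

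Finally, combining the two computations,
\[
\frac{|u(\gamma(t))-u(x_0)|}{d(\gamma(t),x_0)}=\frac{|f(t)-f(0)|}{t}\longrightarrow |f'(0)|=\|\nabla u(x_0)\|,
\]
which is the claim. There is no real obstacle here; the only subtle point is the local minimization $d(\gamma(t),x_0)=t$, so in writing this up I would make that explicit and leave the rest as the one-line chain-rule computation above.
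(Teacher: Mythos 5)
Your proof is correct and is exactly the standard argument the authors have in mind: the paper states this lemma is elementary and omits the proof entirely, and your two ingredients (local length-minimization giving $d(\gamma(t),x_0)=t$ for small $t$, plus the chain rule identifying $f'(0)$ with $\pm\left\Vert \nabla u(x_0)\right\Vert$) are precisely the intended reasoning. No issues.
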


\begin{lemma}
\label{inegalite} Let $a,b,g$ be elements in $G$. Denoting by $d$ the distance
on $G$, we have
\[
d(a.g,b.g)\leq R(g) d(a,b)
\]
\[
d(a,b)\leq R(g^{-1}) d(a.g,b.g).
\]

\end{lemma}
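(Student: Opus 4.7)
The plan is to use the fact, established in the preceding two lemmas, that the operator norm of $(R_g)_{\star,h}$ is constant in $h$ and equals $R(g)$. This immediately makes right translation by $g$ a globally Lipschitz map with constant $R(g)$, and the Riemannian distance bound then follows by the standard length-of-curves argument.

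More precisely, I would begin by fixing a piecewise $C^1$ curve $\gamma:[0,1]\to G$ with $\gamma(0)=a$ and $\gamma(1)=b$, and consider the translated curve $\sigma = R_g\circ\gamma$, which joins $a\cdot g$ to $b\cdot g$. Its length is
\[
L(\sigma)=\int_0^1 \bigl\|(R_g)_{\star,\gamma(t)}\gamma'(t)\bigr\|_{\gamma(t)g}\,dt
\leq \int_0^1 R(g)\,\|\gamma'(t)\|_{\gamma(t)}\,dt = R(g)\,L(\gamma),
\]
where the inequality uses precisely the preceding lemma (namely that $\|(R_g)_{\star,h}\|=R(g)$ independently of $h$). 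Taking the infimum over all admissible $\gamma$ and using that $d(a\cdot g, b\cdot g)\leq L(\sigma)$ gives
\[
d(a\cdot g, b\cdot g)\leq R(g)\,d(a,b),
\]
which is the first inequality.

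For the second inequality, the trick is simply to write $a=(a\cdot g)\cdot g^{-1}$ and $b=(b\cdot g)\cdot g^{-1}$ and apply the first inequality (with $g$ replaced by $g^{-1}$ and $(a,b)$ replaced by $(a\cdot g,\, b\cdot g)$) to obtain
\[
d(a,b)=d\bigl((a\cdot g)\cdot g^{-1},(b\cdot g)\cdot g^{-1}\bigr)\leq R(g^{-1})\,d(a\cdot g, b\cdot g).
\]
There is no real obstacle here; the only subtle point worth noting is the use of the earlier lemma to conclude that the pointwise operator norm of $(R_g)_{\star,h}$ is a true uniform Lipschitz constant for $R_g$, independent of the base point $h$, without which the integral above would only yield a local estimate.
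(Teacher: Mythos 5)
Your proof is correct and follows essentially the same route as the paper: translate a curve joining $a$ and $b$ by $R_g$, bound its length using the uniform norm $\Vert (R_g)_{\star,h}\Vert = R(g)$, and obtain the second inequality by applying the first with $g^{-1}$. The only cosmetic difference is that you take an infimum over all curves while the paper uses a minimizing geodesic; both yield the same estimate.
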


\begin{proof}
If $\gamma$ is a minimizing geodesic between $a$ and $b\ $then
$d(a,b)=\operatorname*{length}(\gamma).$ Also, since $R_{g}\circ\gamma$ is a
path between $a.g$ and $b.g$,
\[
d(a.g,b.g)\leq\operatorname*{length}(R_{g}\circ\gamma)\leq
R(g)\operatorname*{length}(\gamma)=R(g)d(a,b).
\]
We have thus proved the first inequality of the lemma. We derive
\[
d(a,b)=d\left(  (a.g)g^{-1},(b.g)g^{-1}\right)  \leq R(g^{-1})d(a.g,b.g)
\]
which proves the second inequality.
\end{proof}

\begin{proof}
[Proof of Theorem \ref{mpg}]The proof is an extension of Lemma 12.7 of
\cite{Gi} (see also \cite{W}). Let $\Omega$ be a bounded $C^{1}$ domain on $G$
with and let $u\in C^{1}\left(  \overline{\Omega}\right)  $ be a solution of a
left invariant PDE satisfying the comparison principle. Take
\[
k=\sup\left\{  \frac{\left\vert u(x)-u(z)\right\vert }{d(x,z)},\text{ }%
x\in\Omega,\text{ }z\in\partial\Omega\right\}  .
\]
From Lemma \ref{g1} it is then enough to prove that%

\begin{equation}
\left\vert u(x)-u(y)\right\vert \leq kd(x,y) \label{en}%
\end{equation}
for all $x,y\in\Omega.$ Given $x_{1},x_{2}\in\Omega,$ set $z=x_{1}\cdot
x_{2}^{-1},$
\[
\Omega_{z}=\left\{  z^{-1}\cdot x\in G\text{
$\vert$
}x\in\Omega\right\}  =\left\{  x\in G\text{
$\vert$
}z\cdot x\in\Omega\right\}
\]
and define $u_{z}\in C^{1}\left(  \overline{\Omega}_{z}\right)  $ by
$u_{z}(x)=u(zx).$ We have $\Omega\cap\Omega_{z}\neq\varnothing$ since $x_{2}$
belongs to both. By the comparison principle%
\[
\sup\left\{  u(x)-u_{z}(x)\text{
$\vert$
}x\in\Omega\cap\Omega_{z}\right\}  =\sup\left\{  u(x)-u_{z}(x)\text{
$\vert$
}x\in\partial\left(  \Omega\cap\Omega_{z}\right)  \right\}  .
\]
Then, in particular%
\[
u(x_{2})-u(x_{1})=u(x_{2})-u_{z}(x_{2})\leq\sup\left\{  u(x)-u(zx)\text{
$\vert$
}x\in\partial\left(  \Omega\cap\Omega_{z}\right)  \right\}  .
\]
Let $x_{0}\in\partial\left(  \Omega\cap\Omega_{z}\right)  $ be such that
\[
u(x_{0})-u(zx_{0})=\sup\left\{  u(x)-u(zx)\text{
$\vert$
}x\in\partial\left(  \Omega\cap\Omega_{z}\right)  \right\}  .
\]
If $x_{0}\in\partial\left(  \Omega\cap\Omega_{z}\right)  $ then either
$x_{0}\in\partial\Omega$ or $zx_{0}\in\partial\Omega.$ It then follows from
the hypothesis that%
\[
u(x_{2})-u(x_{1})\leq kd(x_{0},zx_{0}).
\]
Now, using Lemma \ref{inegalite} twice, we have
\[
d(x_{0},(x_{1}x_{2}^{-1})x_{0})\leq R(x_{0})d(1,x_{1}x_{2}^{-1})\leq
R(x_{0})R(x_{2}^{-1})d(x_{2},(x_{1}x_{2}^{-1})x_{2})
\]%
\[
=R(x_{0})R(x_{2}^{-1})d(x_{2},x_{1}).
\]

\end{proof}

\begin{proof}
[\textbf{Proof of Corollary \ref{mt}}]Given $R>0$, if $g\in S_{R}$ then
\[
R=d(e,g)=d(g^{-1},e)=d(e,g^{-1})
\]
so that $S_{R}^{-1}=S_{R}.$ It follows that $B_{R}^{-1}=B_{R},$ where $B_{R}$
is the geodesic ball centered at $e$ with radius $R.$ Then, if $u\in C^{1}(G)$
is an entire solution satisfying the hypothesis of Corollary \ref{mt}, it
follows from Theorem \ref{mpg} that%
\[
\max_{B_{R}}\left\Vert \nabla u\right\Vert \leq\max_{g\in B_{R}}\left\Vert
\operatorname*{Ad}\nolimits_{g}\right\Vert \max_{S_{R}}\left\Vert \nabla
u\right\Vert ,
\]
from which the proof follows.
\end{proof}

We close this section by proving a strong maximum principle for the adjoint map:

\begin{proposition}
\label{ad}Let $G$ be a Lie group with negative sectional curvature and let
$\Lambda$ be any open subset of $G.$ Then%
\[
\left\Vert \operatorname*{Ad}\nolimits_{h}\right\Vert <\sup_{g\in
\partial\Lambda}\left\Vert \operatorname*{Ad}\nolimits_{g}\right\Vert
\]
for all $h\in\Lambda.$ In particular
\[
\sup_{g\in\Lambda}\left\Vert \operatorname*{Ad}\nolimits_{g}\right\Vert
=\sup_{g\in\partial\Lambda}\left\Vert \operatorname*{Ad}\nolimits_{g}%
\right\Vert .
\]

\end{proposition}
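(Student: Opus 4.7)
The plan is to recast the proposition as a strong maximum principle for a family of Killing-field norms. Using Lemma \ref{RAd} together with the identity $\|\operatorname*{Ad}\nolimits_g X\|_e=\|(R_g)_\star X\|_g$ that appears in its proof, one can write
\[
\|\operatorname*{Ad}\nolimits_g\|^2 \;=\; \sup_{X\in\mathfrak{g},\ \|X\|_e=1}\|\widetilde X(g)\|_g^2,
\]
where $\widetilde X$ denotes the right-invariant vector field on $G$ with $\widetilde X(e)=X$. The essential point is that the flow of $\widetilde X$ is the one-parameter group of left translations $L_{\exp(tX)}$, which are isometries of the left-invariant metric; hence each $\widetilde X$ is a \emph{Killing field} on $G$.

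The proposition then reduces to showing that $g\mapsto\|\operatorname*{Ad}\nolimits_g\|$ admits no local maximum on $G$. Suppose for contradiction that $h\in\Lambda$ were such a local maximum. By compactness of the unit sphere in $\mathfrak{g}$, pick a unit $X_0$ realizing the operator norm at $h$, so that $\|\widetilde{X_0}(h)\|_h^2=\|\operatorname*{Ad}\nolimits_h\|^2$. The pointwise inequality $\|\widetilde{X_0}(g)\|_g^2\le\|\operatorname*{Ad}\nolimits_g\|^2$ then implies that the smooth function $\varphi(g):=\|\widetilde{X_0}(g)\|_g^2$ also attains a local maximum at $h$.

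To analyse $\varphi$ I would invoke the classical fact that the restriction $J(t):=\widetilde{X_0}(\gamma(t))$ of a Killing field to a geodesic $\gamma$ is a Jacobi field, together with the standard formula
\[
\tfrac{d^2}{dt^2}\|J\|^2 \;=\; 2\|\nabla_{\gamma'}J\|^2 - 2\langle R(\gamma',J)\gamma',J\rangle,
\]
whose right-hand side is non-negative in non-positive sectional curvature. Thus $\varphi$ is convex along every geodesic through $h$, and a convex function attaining an interior maximum on an interval is constant, so both $\|\nabla_{\gamma'}J\|$ and $\langle R(\gamma',J)\gamma',J\rangle$ must vanish identically along each such geodesic. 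The \emph{strict} negativity of the sectional curvature then forces $\widetilde{X_0}(\gamma(t))$ to be linearly dependent on $\gamma'(t)$ for every $t$. Evaluating at $t=0$ while letting the initial direction $\gamma'(0)$ range over all of $T_hG$ gives $\widetilde{X_0}(h)=0$, and since $\widetilde{X_0}(h)=(R_h)_\star X_0$ with $(R_h)_\star$ a linear isomorphism, this contradicts $X_0\neq 0$.

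The delicate step is precisely this last one: the Jacobi-field formula only furnishes convexity (not strict convexity) of $\|J\|^2$ in non-positive curvature, and the strict negativity of the sectional curvature is what one needs in order to rule out the degenerate possibility $J\parallel\gamma'$, which is achieved by varying the direction of $\gamma$ at $h$. Once interior local maxima are excluded, the conclusion of the proposition follows by the standard argument: when $\overline\Lambda$ is compact the continuous function $\|\operatorname*{Ad}\|$ attains its maximum on $\overline\Lambda$ only on $\partial\Lambda$, yielding $\|\operatorname*{Ad}\nolimits_h\|<\sup_{\partial\Lambda}\|\operatorname*{Ad}\nolimits_g\|$ for every $h\in\Lambda$; in the non-compact case the right-hand side is infinite and the inequality holds trivially.
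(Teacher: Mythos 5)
Your proof is correct and follows essentially the same route as the paper's: identify $\left\Vert \operatorname{Ad}_{g}\right\Vert$ with the supremum of the norms of right-invariant (hence Killing) vector fields, restrict such a field to geodesics to obtain Jacobi fields, and use the second-variation formula $\frac{d^{2}}{dt^{2}}\Vert J\Vert^{2}=2\Vert J'\Vert^{2}-2\langle R(\gamma',J)\gamma',J\rangle$ together with the strict negativity of the sectional curvature to exclude an interior maximum; your handling of the degenerate case $J\parallel\gamma'$ by varying the geodesic direction at $h$ is in fact slightly more careful than the paper's single choice of $u\neq X(h)$. The one questionable point is your closing remark that for non-compact $\overline{\Lambda}$ the right-hand side is infinite (it need not be, e.g. $\Lambda=G\setminus\{e\}$ has $\partial\Lambda=\{e\}$), but the paper's own proof likewise only rules out attainment of the supremum over $\Lambda$ at an interior point, so this limitation is shared by both arguments rather than being a defect of yours alone.
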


\begin{proof}
By contradiction, assume that
\[
\left\Vert \operatorname*{Ad}\nolimits_{h}\right\Vert =\sup_{g\in\Lambda
}\left\Vert \operatorname*{Ad}\nolimits_{g}\right\Vert
\]
for some $h\in\Lambda.$ There exists $x\in T_{e}G,$ $\left\Vert x\right\Vert
=1,$ such that $\left\Vert \operatorname*{Ad}\nolimits_{h}\right\Vert
=\left\Vert \operatorname*{Ad}\nolimits_{h}\left(  x\right)  \right\Vert .$
Let $X$ be the right invariant vector field of $G$ such that $X\left(
e\right)  =x.$ Choose $u\in T_{h}G,$ $\left\Vert u\right\Vert =1,$ such that
$u\neq X\left(  h\right)  $ and let $\gamma\left(  t\right)  ,$ $t\geq0,\ $be
the geodesic parametrized by arc length in $G$ such that $\gamma\left(
0\right)  =h$ and $\gamma^{\prime}\left(  0\right)  =u.$ Then
\begin{align}
\left.  \frac{d}{dt}\left\Vert \operatorname*{Ad}\nolimits_{\gamma\left(
t\right)  }\left(  x\right)  \right\Vert ^{2}\right\vert _{t=0} &
=0\label{d}\\
\left.  \frac{d^{2}}{dt^{2}}\left\Vert \operatorname*{Ad}\nolimits_{\gamma
\left(  t\right)  }\left(  x\right)  \right\Vert ^{2}\right\vert _{t=0} &
\leq0.\label{d2}%
\end{align}

We have%
\begin{align*}
\left\Vert \operatorname*{Ad}\nolimits_{\gamma\left(  t\right)  }\left(
x\right)  \right\Vert ^{2}  &  =\left\langle \operatorname*{Ad}%
\nolimits_{\gamma\left(  t\right)  }\left(  x\right)  ,\operatorname*{Ad}%
\nolimits_{\gamma\left(  t\right)  }\left(  x\right)  \right\rangle \\
&  =\left\langle d\left(  L_{\gamma\left(  t\right)  }^{-1} \right)
_{\gamma\left(  t\right)  }\left(  d\left(  R_{\gamma\left(  t\right)
}\right)  _{e}\right)  \left(  x\right)  ,d\left(  L_{\gamma\left(  t\right)
}^{-1}\right)  _{\gamma\left(  t\right)  } \left(  d\left(  R_{\gamma\left(
t\right)  }\right)  _{e}\right)  \left(  x\right)  \right\rangle \\
&  =\left\langle d\left(  R_{\gamma\left(  t\right)  }\right)  _{e}\left(
x\right)  ,d\left(  R_{\gamma\left(  t\right)  }\right)  _{e}\left(  x\right)
\right\rangle ,\text{ }t\geq0.
\end{align*}

Then%
\[
\left\Vert \operatorname*{Ad}\nolimits_{\gamma\left(  t\right)  }\left(
x\right)  \right\Vert ^{2}=\left\langle X\left(  \gamma\left(  t\right)
\right)  ,X\left(  \gamma\left(  t\right)  \right)  \right\rangle =\left\Vert
X\left(  \gamma\left(  t\right)  \right)  \right\Vert ^{2},\text{ }t\geq0.
\]

Since right invariant vector fields are Killing fields and Killing fields
restricted to geodesics are Jacobi vector fields,
\[
J\left(  t\right)  :=X\left(  \gamma\left(  t\right)  \right)
\]
is a Jacobi field along $\gamma,$ $t\geq0.$ Thus, $\left\Vert
\operatorname*{Ad}\nolimits_{\gamma\left(  t\right)  }\left(  x\right)
\right\Vert ^{2}$ is equal to the square of the norm of the Jacobi field
$J\left(  t\right)  $ along $\gamma$ satisfying the initial conditions%
\begin{align*}
J\left(  0\right)   &  =X\left(  h\right) \\
J^{\prime}\left(  0\right)   &  =\nabla_{\gamma^{\prime}\left(  0\right)
}X=\nabla_{u}X.
\end{align*}

We have, from (\ref{d})
\[
\left.  \frac{d}{dt}\left\Vert J\left(  t\right)  \right\Vert ^{2}\right\vert
_{t=0}=2\left\langle J^{\prime}\left(  0\right)  ,J\left(  0\right)
\right\rangle =0.
\]
And, using the Jacobi equation,%
\begin{align*}
\frac{d^{2}}{dt^{2}}\left\Vert J\left(  t\right)  \right\Vert ^{2}  &
=2\left\langle J^{\prime\prime}\left(  t\right)  ,J\left(  t\right)
\right\rangle +2\left\langle J^{\prime}\left(  t\right)  ,J^{\prime}\left(
t\right)  \right\rangle \\
&  =-2\left\langle R(\gamma^{\prime},J)\gamma^{\prime},J\left(  t\right)
\right\rangle +2\left\Vert J^{\prime}\left(  t\right)  \right\Vert ^{2}.
\end{align*}

Since $u\neq X\left(  h\right)  $ the vector fields $J(t)=X(\gamma(t))$ and
$\gamma^{\prime}(t)$ are linearly independent along $\gamma$ and we then have%
\[
\frac{d^{2}}{dt^{2}}\left\Vert J\left(  t\right)  \right\Vert ^{2}=-2K\left(
\gamma^{\prime},J\right)  \left\Vert \gamma^{\prime}\wedge J\right\Vert ^{2}
+2\left\Vert J^{\prime}\right\Vert ^{2}.
\]

Since $K<0$ it follows that%
\[
\left.  \frac{d^{2}}{dt^{2}}\left\Vert J\left(  t\right)  \right\Vert
^{2}\right\vert _{t=0}>0
\]
contradicting (\ref{d2}). This proves the proposition.
\end{proof}

\section{The hyperbolic space. Proof of Theorem \ref{hyp}.}

We begin by calculating the norm of the adjoint map in the hyperbolic space to
apply Corollary \ref{mt}. We present an explicit construction of the Lie group
structure of the hyperbolic space, that comes from the Iwasawa decomposition
\cite{H}.

In the half-space model
\[
\mathbb{H}^{n}=\left\{  \left(  x_{1},...,x_{n}\right)  ,\text{
$\vert$
}x_{n}>0\right\}  ,\text{ }ds^{2}=\frac{\delta_{ij}}{x_{n}^{2}},
\]
of the hyperbolic $n-$dimensional space, $n\geq2,$ given $s>0$ and $t:=\left(
t_{1},...,t_{n-1}\right)  \in\mathbb{R}^{n-1}$, define
\begin{align*}
a_{s},n_{t}  &  :\mathbb{H}^{n}\rightarrow\mathbb{H}^{n},\\
a_{s}\left(  x_{1},...,x_{n}\right)   &  =s\left(  x_{1},...,x_{n}\right) \\
n_{t}\left(  x_{1},...,x_{n}\right)   &  =\left(  x_{1}+t_{1},...,x_{n-1}%
+t_{n-1},x_{n}\right)  .
\end{align*}
Set
\[
G:=AN=\left\{  a_{s}\circ n_{t}\text{
$\vert$
}s>0,\text{ }t\in\mathbb{R}^{n-1}\right\}
\]
where%
\begin{align*}
A  &  =\left\{  a_{s},\text{ }s>0\right\} \\
N  &  =\left\{  n_{t},\text{ }t\in\mathbb{R}^{n-1}\right\}  .
\end{align*}

Given $p\in\mathbb{H}^{n}$ there is one and only one $g_{p}\in G\subset
\operatorname*{Iso}\left(  \mathbb{H}^{n}\right)  $ such that $p=g_{p}\left(
\left(  0,...,0,1\right)  \right)  .$ Indeed:\ If $p=\left(  x_{1}%
,...,x_{n}\right)  \in\mathbb{H}^{n}$ define $n:=n_{\left(  x_{1}%
,...,x_{n-1},0\right)  },a:=a_{x_{n}}\in\operatorname*{Iso}\left(
\mathbb{H}^{n}\right)  ,$ that is%
\begin{align*}
n\left(  z_{1},...,z_{n}\right)   &  =\left(  z_{1}+x_{1},...,z_{n-1}%
+x_{n-1},z_{n}\right) \\
a  &  =x_{n}\left(  z_{1},...,z_{n}\right)  ,\text{ }\left(  z_{1}%
,...,z_{n}\right)  \in\mathbb{H}^{n}.
\end{align*}

Then, taking%
\begin{equation}
g_{p}=n\circ a \label{for}%
\end{equation}
we have%
\[
g_{p}\left(  0,...,0,1\right)  =n\left(  a\left(  0,...,0,1\right)  \right)
=n\left(  0,...,0,x_{n}\right)  =\left(  x_{1},...,x_{n}\right)  =p.
\]
Note that $n$ and $a$ are not uniquely determined by $p,$ but $g_{p}$ is.

One may see that with the operation%
\[
p\cdot q:=\left(  g_{p}\circ g_{q}\right)  \left(  \left(  0,...,0,1\right)
\right)
\]
$\mathbb{H}^{n}$ is a Lie group (a solvable Lie group indeed. This is a
general fact that holds for symmetric spaces of non compact type (\cite{H},
Chapter VI))$.$ Moreover, given $p\in\mathbb{H}^{n}$ we have, for any
$q\in\mathbb{H}^{n},$
\[
L_{p}\left(  q\right)  =p\cdot q=\left(  g_{p}\circ g_{q}\right)  \left(
\left(  0,...,0,1\right)  \right)  =g_{p}\left(  g_{q}\left(  \left(
0,...,0,1\right)  \right)  \right)  =g_{p}\left(  q\right)
\]
that is $L_{p}=g_{p}.$ Since $g_{q}\in\operatorname*{Iso}\left(
\mathbb{H}^{n}\right)  $ it follows that the left translation $L_{p}$ is an
isometry of $\mathbb{H}^{n}$ with respect to the hyperbolic metric that is,
the hyperbolic metric is left invariant with respect to the Lie group
structure of $\mathbb{H}^{n}.$

\begin{proposition}
\label{norad}Consider $\mathbb{H}^{n}$ as a Lie group and let $e$ be its
neutral element. If $B_{R}$ is the closed geodesic ball of $\mathbb{H}^{n}$
centered at $e$ with radius $R$ then
\begin{equation}
\max_{g\in B_{R}}\left\Vert \operatorname*{Ad}\nolimits_{g}\right\Vert =\cosh
R+\sinh R. \label{nad}%
\end{equation}

\end{proposition}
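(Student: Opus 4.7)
The strategy is to compute $\operatorname{Ad}_g$ explicitly in the Iwasawa parametrization $g = n_t \circ a_s$, $t \in \mathbb{R}^{n-1}$, $s > 0$, and then maximize its operator norm over the ball. Since the hyperbolic metric at $e = (0, \ldots, 0, 1)$ coincides with the Euclidean one, the standard basis of $T_e \mathbb{H}^n$ is orthonormal. I would compute the conjugations $C_{a_s}$ and $C_{n_t}$ explicitly from the group law (equivalently, from $g_{p \cdot q} = g_p \circ g_q$) and differentiate at $e$; the expected outputs are
\[
\operatorname{Ad}_{a_s}(v) = (s v_1, \ldots, s v_{n-1}, v_n), \qquad \operatorname{Ad}_{n_t}(v) = (v_1 - t_1 v_n, \ldots, v_{n-1} - t_{n-1} v_n, v_n).
\]
By the chain rule $\operatorname{Ad}_g = \operatorname{Ad}_{n_t} \circ \operatorname{Ad}_{a_s}$, so in the orthonormal basis the adjoint has block-triangular matrix
\[
M = \begin{pmatrix} s\, I_{n-1} & -t \\ 0 & 1 \end{pmatrix},
\]
with $t$ viewed as a column vector.

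Next I would compute $\|M\|^2$ as the largest eigenvalue of $M^T M$. Decomposing $\mathbb{R}^n$ into the $(n-2)$-dimensional subspace $\{(v,0) : v \perp t\}$ and the complementary $2$-plane spanned by $(t/|t|, 0)$ and $(0,1)$, $M^T M$ acts as $s^2 I$ on the first summand, while on the $2$-plane it reduces to the symmetric matrix $\begin{pmatrix} s^2 & -sr \\ -sr & r^2+1 \end{pmatrix}$ with $r = |t|$, whose largest eigenvalue is
\[
\lambda_+ = \tfrac{1}{2}\bigl((s^2 + r^2 + 1) + \sqrt{(s^2 + r^2 + 1)^2 - 4 s^2}\bigr).
\]

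Finally, since $g(e) = (t_1, \ldots, t_{n-1}, s)$, the half-space distance formula gives $\cosh d(e, g) = (s^2 + r^2 + 1)/(2s)$, which simplifies the discriminant to $4 s^2 \sinh^2 d(e, g)$ and yields $\lambda_+ = s\, e^{d(e, g)}$. For $g \in B_R$ one has $d(e, g) \le R$ and $\cosh d(e, g) \ge (s^2 + 1)/(2s) = \cosh(|\log s|)$, so $s \le e^{d(e, g)} \le e^R$; hence $\lambda_+ \le e^{2R}$, and this bound also dominates the other eigenvalue $s^2$. Equality is attained at $g = a_{e^R} \in S_R$, proving (\ref{nad}). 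The main (modest) obstacle is just the eigenvalue computation for the $2 \times 2$ block; the block-triangular form of $M$ keeps it routine, and as an alternative Proposition \ref{ad} could be invoked to restrict attention to $g \in S_R$ from the outset.
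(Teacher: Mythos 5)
Your proof is correct, and I verified the key computations: the formulas for $\operatorname{Ad}_{a_s}$ and $\operatorname{Ad}_{n_t}$ agree (in the case $n=2$) with the paper's expression $\operatorname{Ad}_{(x,y)}(a,b)=(ya-xb,\,b)$; the $2\times2$ block of $M^TM$ has trace $s^2+r^2+1$ and determinant $s^2$, and combined with the half-space distance formula $\cosh d(e,g)=(s^2+r^2+1)/(2s)$ this gives the clean identity $\lambda_+=s\,e^{d(e,g)}$, whence $\Vert\operatorname*{Ad}_g\Vert=\sqrt{s}\,e^{d(e,g)/2}\le e^{R}$ on $B_R$ with equality at $(0,\dots,0,e^{R})$. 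Your route is genuinely different from the paper's. The paper first reduces to $n=2$ by passing to a totally geodesic $\mathbb{H}^2$ through $e$ containing the maximizing direction and the direction to $g$ (asserting it is a Lie subgroup), then parametrizes the geodesic circle in the half-plane model as $(\sinh r\cos\theta,\cosh r+\sinh r\sin\theta)$ and maximizes the resulting trigonometric expression in $\theta$, $a$, $b$ by inspection. You instead stay in dimension $n$, write the full matrix of $\operatorname{Ad}_g$ in the orthonormal basis at $e$, and diagonalize $M^TM$. What your approach buys: it bypasses both the reduction-to-$\mathbb{H}^2$ step (whose ``Lie subgroup'' claim the paper does not justify) and the somewhat unargued ``we may see that the maximum occurs at $\theta=\pi/2$'' step; moreover the exact formula $\Vert\operatorname*{Ad}_g\Vert^2=s\,e^{d(e,g)}$ makes the maximization over $B_R$ transparent and even reproves Proposition \ref{ad} for $\mathbb{H}^n$ directly. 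What the paper's approach buys is a more visual, low-dimensional geometric picture of where the maximum sits on the geodesic circle. The only thing to tighten in a final write-up is to actually carry out the two differentiations you label as ``expected outputs''; they are routine and come out as you state.
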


\begin{proof}
We claim that it is enough to consider the $2-$dimensional case. Indeed: Let
$g\in B_{R}$ and $x\in T_{e}\mathbb{H}^{n}$ be such that%
\[
\left\Vert \operatorname*{Ad}\nolimits_{g}\left(  x\right)  \right\Vert
=\max_{h\in B_{R}}\left\Vert \operatorname*{Ad}\nolimits_{h}\right\Vert .
\]
Let $y\in T_{e}\mathbb{H}^{n}$ be a nonzero vector tangent to the geodesic
from $e$ to $g.$ There exists a totally geodesic hyperbolic plane
$\mathbb{H}^{2}$ of $\mathbb{H}^{n}$ such that $e\in\mathbb{H}^{2}$ and
$x,y\in T_{e}\mathbb{H}^{2}.$ Since $\mathbb{H}^{2}$ is a Lie subgroup of
$\mathbb{H}^{n}$ we have
\[
\left\Vert \operatorname*{Ad}\nolimits_{g}\left(  x\right)  \right\Vert
=\max_{h\in D_{R}}\left\Vert \operatorname*{Ad}\nolimits_{h}\right\Vert
\]
where $D_{R}=B_{R}\cap\mathbb{H}^{2}$ is the closed geodesic disk centered at
$e$ with radius $R.$ It is clear that the maximum of the norm of the adjoint
map does not depend on hyperbolic plane containing $e.$ This proves our claim.

Considering the half-plane model for $\mathbb{H}^{2}$ with $e=\left(
0,1\right)  ,$ given $p\in\mathbb{H}^{2},$ the conjugation $C_{p}%
:\mathbb{H}^{2}\rightarrow\mathbb{H}^{2}$ is given by%
\begin{equation}
C_{_{p}}\left(  q\right)  =\left(  g_{p}\circ g_{q}\circ g_{p}^{-1}\right)
\left(  0,1\right)  =g_{p}\left(  g_{q}\left(  g_{p}^{-1}\left(  0,1\right)
\right)  \right)  . \label{conj}%
\end{equation}
Expanding (\ref{conj}) we arrive to%
\[
C_{\left(  x,y\right)  }\left(  z,w\right)  =\left(  -xw+yz+x,w\right)
\]
from which we obtain, at a given $X:=\left(  a,b\right)  \in T_{\left(
0,1\right)  }\mathbb{H}^{2},$%
\[
\operatorname*{Ad}\nolimits_{\left(  x,y\right)  }\left(  a,b\right)
=d\left(  C_{\left(  x,y\right)  }\right)  _{e}\left(  a,b\right)  =\left(
-xb+ya,b\right)  .
\]
Since at $\left(  0,1\right)  $ the hyperbolic metric coincides with the
Euclidean metric and since $\operatorname*{Ad}\nolimits_{\left(  x,y\right)
}\left(  X\right)  \in T_{\left(  0,1\right)  }\mathbb{H}^{2}$ we obtain%
\begin{equation}
\left\Vert \operatorname*{Ad}\nolimits_{\left(  x,y\right)  }\left(  X\right)
\right\Vert =\sqrt{\left(  -xb+ya\right)  ^{2}+b^{2}}. \label{a}%
\end{equation}
Since the identity of $\mathbb{R}_{+}^{2}$ is a conformal map between the
Euclidean and hyperbolic geometries, the hyperbolic geodesic circle in the
half plane model of $\mathbb{H}^{2}$ centered at $(0,1)$ with hyperbolic
radius $R$ is also an Euclidean circle. Moreover, because an Euclidean
symmetry with respect to a vertical line is also a hyperbolic isometry, the
Euclidean center of the circle is at some point $\left(  0,y_{0}\right)  $ of
the vertical line $x=0.$ The Euclidean circle is parametrized by
\begin{equation}
\left(  0,y_{0}\right)  +r\left(  \cos\theta,\sin\theta\right)  ,\text{
}\theta\in\left[  0,2\pi\right)  , \label{1}%
\end{equation}
where $r$ is the Euclidean radius. Since $\gamma\left(  t\right)  =\left(
0,e^{t}\right)  $ is an arc length hyperbolic geodesic such that
$\gamma\left(  0\right)  =\left(  0,1\right)  $ we have $t=d_{\mathbb{H}^{2}%
}\left(  \left(  0,1\right)  ,\left(  0,e^{\pm t}\right)  \right)  .$ In
particular
\[
R=d_{\mathbb{H}^{2}}\left(  \left(  0,1\right)  ,\left(  0,e^{R}\right)
\right)  =d_{\mathbb{H}^{2}}\left(  \left(  0,1\right)  ,\left(
0,e^{-R}\right)  \right)  .
\]
It follows that the points $\left(  0,e^{R}\right)  $ and $\left(
0,e^{-R}\right)  $ are both in the hyperbolic circle centered at $\left(
0,1\right)  $ and with hyperbolic radius $R.$ Then the Euclidean circle must
contain both points $\left(  0,e^{R}\right)  $ and $\left(  0,e^{-R}\right)
.$ Since these points are in the same vertical straight line, the Euclidean
center of Euclidean circle is
\[
\frac{\left(  0,e^{R}\right)  +\left(  0,e^{-R}\right)  }{2}=\left(  0,\cosh
R\right)  .
\]
And the Euclidean radius $r$ of this hyperbolic circle is just half of the
Euclidean distance between the points $\left(  0,e^{R}\right)  $ and $\left(
0,e^{-R}\right)  $ that is,%
\[
r=\frac{1}{2}d_{\mathbb{R}^{2}}\left(  \left(  0,e^{R}\right)  ,\left(
0,e^{-R}\right)  \right)  =\frac{e^{R}-e^{-R}}{2}=\sinh R.
\]
It follows from (\ref{1}) that the geodesic disk $D_{R}$ of $\mathbb{H}^{2}$
centered at $\left(  0,1\right)  $ with radius $R$ is given by%
\[
D_{R}=\left\{  \left(  \sinh r\cos\theta,\cosh r+\sinh r\sin\theta\right)
\text{
$\vert$
}\theta\in\mathbb{R},\text{ }0\leq r\leq R\right\}  .
\]

From (\ref{a}),%
\[
\left\Vert \operatorname*{Ad}\nolimits_{\left(  x,y\right)  }\left(  X\right)
\right\Vert ^{2}=\left(  \cosh r+\sinh r\sin\theta\right)  ^{2}\left(
\frac{-b\sinh r\cos\theta}{\cosh r+\sinh r\sin\theta}+a\right)  ^{2}+b^{2}%
\]
and we may see\ that the biggest value of right hand side of the last
inequality occurs at $\theta=\pi/2$ and $a=1,$ $b=0.$ From Proposition
\ref{ad} (which, in the case of the hyperbolic space, can also be directly
confirmed from the expression above) we obtain (\ref{nad}).
\end{proof}

\begin{proof}
[Proof of Theorem \ref{hyp}]The first part of the proof is a direct
consequence of Corollary \ref{mt} and Proposition \ref{norad}. For the second
part consider a polar coordinate system $(r,\theta,\varphi_{1},\dots
,\varphi_{n-2})$ of $\mathbb{H}^{n}$ centered at a point $o\in\mathbb{H}^{n}$
that is, $\Theta=\left(  \theta,\varphi_{1},...,\varphi_{n-2}\right)  $ are
spherical coordinates of the unit sphere $\mathbb{S}^{n-1}$ centered at the
origin of $T_{o}\mathbb{H}^{n}$ and $p\in\mathbb{H}^{n}\backslash\left\{
o\right\}  $ is parametrized by%
\[
p=\exp_{o}\left(  r\Theta\right)  ,\text{ }r>0,\text{ }\Theta\in
\mathbb{S}^{n-1}.
\]
Let $v$ be the function that depends only on $r$ and $\theta$ given by
\[
v(r,\theta)=\frac{C(n-1)}{2}\cdot\frac{\displaystyle\int_{0}^{r}(\sinh
s)^{n-1}ds}{(\sinh r)^{n-1}}\cos\theta.
\]
Remind that in this coordinate system, the metric has the form
\begin{align*}
ds^{2}=dr^{2}+\sinh^{2}rd\theta &  +\sinh^{2}r\sin^{2}\theta d\varphi
_{1}+\sinh^{2}r\sin^{2}\theta\sin^{2}\varphi_{1}d\varphi_{2}\\[5pt]
&  +\dots+\sinh^{2}r\sin^{2}\theta\sin^{2}\varphi_{1}\dots\sin^{2}%
\varphi_{n-3}d\varphi_{n-2}%
\end{align*}
and, since $v=v(r,\theta)$, the Laplacian of $v$ can be expressed by
\[
\Delta v=\frac{\partial^{2}v}{\partial\,r^{2}}+(n-1)\coth r\frac{\partial
v}{\partial\,r}+(n-2)\frac{\cot\theta}{\sinh^{2}r}\cdot\frac{\partial
v}{\partial\,\theta}+\frac{1}{\sinh^{2}r}\cdot\frac{\partial^{2}v}%
{\partial\,\theta^{2}}.
\]
Hence, by a straight calculation, we have that $\Delta v=0$. Moreover, since
$v$ depends only on $r$ and $\theta$ and the fields $E_{1}:=\frac{\partial
}{\partial\,r}$ and $E_{2}:=\frac{\partial}{\partial\,\theta}$ are orthogonal,
we have
\begin{align*}
&  \Vert\nabla v(r,\theta)\Vert^{2}=\frac{|E_{1}(v)|^{2}}{\Vert E_{1}\Vert
^{2}}+\frac{|E_{2}(v)|^{2}}{\Vert E_{2}\Vert^{2}}=\left(  \frac{|\partial
_{r}v|^{2}}{1}+\frac{|\partial_{\theta}v|^{2}}{\sinh^{2}r}\right) \\[5pt]
&  =\left(  \frac{C(n-1)}{2}\right)  ^{2}\left(  1-(n-1)\displaystyle\cosh
r(\sinh r)^{-n}\int_{0}^{r}(\sinh s)^{n-1}ds\right)  ^{2}\cos^{2}\theta\\
&  +\left(  \frac{C(n-1)}{2}\right)  ^{2}\left(  \displaystyle(\sinh
r)^{-n}\int_{0}^{r}(\sinh s)^{n-1}ds\right)  ^{2}\sin^{2}\theta.
\end{align*}
Then,
\[
\Vert\nabla v(r,\theta)\Vert=\frac{C}{2(1+\cosh r)}\quad\text{ for any
}\;r\geq0\;\text{ if }\;n=2
\]
and
\[
\Vert\nabla v(r,\theta)\Vert\approx Ce^{-r}\sin\theta\quad\text{ as
}\;r\rightarrow+\infty\;\text{ if }\;n>2.
\]
In both cases
\[
\sup_{S_{R}}\Vert\nabla v(R,\theta)\Vert\approx Ce^{-R}\quad\text{as }\quad
R\rightarrow+\infty.
\]
Therefore, $v$ is a bounded non constant harmonic function such that
\[
\lim_{R\rightarrow\infty}\left(  e^{R}\sup_{S_{R}}\left\Vert \nabla
v\right\Vert \right)  =C,
\]
completing the proof.
\end{proof}

\newpage\noindent Ari Aiolfi \newline\noindent Universidade Federal de Santa
Maria\newline\noindent Brazil\newline\noindent ari.aiolfi@ufsm.br

\medskip

\noindent Leonardo Bonorino\newline\noindent Universidade Federal do Rio
Grande do Sul\newline\noindent Brazil\newline\noindent leonardo.bonorino@ufrgs.br

\medskip

\noindent Jaime Ripoll\newline\noindent Universidade Federal do Rio Grande do
Sul\newline\noindent Brazil\newline\noindent jaime.ripoll@ufrgs.br

\medskip

\noindent Marc Soret

\noindent Universit\'{e} de Tours \newline\noindent France

\noindent marc.soret@idpoisson.fr

\medskip

\noindent Marina Ville

\noindent Univ Paris Est Creteil, CNRS, LAMA, F-94010 Creteil \newline%
\noindent France

\noindent villemarina@yahoo.fr

\end{document}